\newtheorem{theorem}{Theorem}[section]
\newtheorem{lemma}[theorem]{Lemma}
\title{Remarks on shrinking gradient K\"ahler-Ricci solitons with positive bisectional curvature}
\author{Guoqiang Wu}
\address{Department of Mathematics, East China Normal University}
\email{gqwu@math.ecnu.edu.cn}
\author{Shijin Zhang}
\address{School of Mathematics and Systems Science, Beihang University, Beijing 100191, P.R. China}
\email{shijinzhang@buaa.edu.cn}
\date{}							
\begin{document}
\maketitle
\begin{abstract}
\noindent
In this short note, using an argument by Munteanu and Wang, we provide an alternative proof of the fact first obtained by Lei Ni that shrinking gradient K\"ahler-Ricci solitons with positive bisectional curvature must be compact.\end{abstract}

\section{Introduction}
A gradient Ricci soliton is a self-similar solution to the Ricci
flow which flows by diffeomorphism  and homothety.  The study of solitons has become
increasingly important in both the study of the Ricci
flow and metric measure theory. In Perelman's proof of Poincar\'e conjecture , one  issue he needs to prove is that three dimensional shrinking  gradient Ricci soliton with positive sectional curvature is compact. It is natural to ask whether this   holds in high dimension. Last year, Munteaun-Wang \cite{Munteanu-Wang} proved that this is true.
\begin{theorem}[Munteanu-Wang \cite{Munteanu-Wang}]
Any shrinking gradient Ricci soliton with nonnegative sectional curvature and positive Ricci curvature is compact.
\end{theorem}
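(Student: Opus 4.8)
The plan is to argue by contradiction. Suppose $(M^n,g,f)$ is a complete \emph{noncompact} shrinking gradient Ricci soliton with $\mathrm{sec}\ge 0$ and $\mathrm{Ric}>0$; after rescaling we may take $\mathrm{Ric}+\nabla^2 f=\tfrac12 g$. First I would record the standard structure of such solitons. By Cao--Zhou, $f$ is proper with $\tfrac14(r(x)-c)^2\le f(x)\le\tfrac14(r(x)+c)^2$, where $r(x)=d(x,p)$; in particular $f$ attains its minimum at a point $p$, $|\nabla f|\to\infty$ at infinity, and, after fixing the additive constant, $R+|\nabla f|^2=f$. By Chen, $R\ge 0$, hence here $R>0$ everywhere. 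One also has $\Delta f=\tfrac n2-R$, $\nabla R=2\,\mathrm{Ric}(\nabla f,\cdot)$, and $\Delta_f R=R-2|\mathrm{Ric}|^2$ for the drift Laplacian $\Delta_f u=\Delta u-\langle\nabla f,\nabla u\rangle$, together with $\int_M e^{-f}<\infty$ and $\int_M f\,e^{-f}<\infty$. Finally, $\mathrm{sec}\ge 0$ will be used through the pinching $|\mathrm{Rm}|\le c(n)R$ (all sectional curvatures lie in $[0,R/2]$) and through Bishop--Gromov volume and Poincar\'e estimates, which hold since $\mathrm{Ric}\ge 0$.

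\emph{Step 1: a positive lower bound for $R$.} Along an integral curve of $\nabla f$ one has $\tfrac{d}{dt}R=\langle\nabla R,\nabla f\rangle=2\,\mathrm{Ric}(\nabla f,\nabla f)\ge 0$, so $R$ is nondecreasing along the outward gradient flow of $f$. Since $\nabla f(p)=0$, $f(p)=R(p)>0$, and at any critical point $q$ of $f$ likewise $f(q)=R(q)$. Given $x_i\to\infty$, flow backward along $-\nabla f$: as $f$ decreases and is proper, the trajectory either enters the fixed compact set $\{f\le 2f(p)\}$, so that $R(x_i)\ge c_1:=\min_{\{f\le 2f(p)\}}R>0$, or it limits onto a critical point $q_i$ with $f(q_i)\ge 2f(p)$, so that $R(x_i)\ge R(q_i)=f(q_i)\ge 2f(p)>0$. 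Hence $\liminf_{x\to\infty}R(x)\ge c_0:=\min(c_1,2f(p))>0$, so $\{R<c_0\}$ is bounded and $\inf_M R>0$. This step uses only $\mathrm{Ric}>0$.

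\emph{Step 2 (the heart): $\mathrm{sec}\ge 0$ is incompatible with Step~1.} I would follow Munteanu--Wang's integral argument. Integrating $\Delta_f R=R-2|\mathrm{Ric}|^2$ over the sublevel sets $D_t=\{f\le t\}$ and using $\langle\nabla R,\nu\rangle=2\,\mathrm{Ric}(\nabla f,\nabla f)/|\nabla f|\ge 0$ on $\Sigma_t=\{f=t\}$ yields $\int_M|\mathrm{Ric}|^2 e^{-f}\le\tfrac12\int_M R\,e^{-f}<\infty$ (and, by pinching, $\int_M|\mathrm{Rm}|^2 e^{-f}<\infty$); estimating the boundary term with $\mathrm{Vol}(\Sigma_t)\lesssim t^{(n-1)/2}$ (Bishop--Gromov) shows it vanishes in the limit, so in fact equality holds. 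The task is then to upgrade these weighted integral bounds to pointwise decay of the curvature at infinity, contradicting $\inf_M R>0$. For this I would combine several ingredients: the level sets $\Sigma_t$ have second fundamental form $\mathrm{II}=|\nabla f|^{-1}\bigl(\tfrac12 g-\mathrm{Ric}\bigr)\big|_{T\Sigma_t}$, which becomes small once $\mathrm{Ric}$ is controlled and $|\nabla f|\to\infty$; the index-form estimate $\int_1^\infty\mathrm{Ric}(\gamma'(s),\gamma'(s))\,ds\le C(n)$ along a minimizing ray $\gamma$ from $p$, together with Cao--Zhou's fact that $\gamma'(s)$ approaches the direction of $\nabla f$, produces points $\gamma(s_k)\to\infty$ at which $\mathrm{Ric}$ in the radial direction, and hence (by $\mathrm{sec}\ge 0$) all sectional curvatures through that direction, tend to $0$; Bishop--Gromov, the Poincar\'e inequality and a mean-value/Moser iteration for the elliptic equation satisfied by $R$ turn smallness in the weighted integral into pointwise smallness of $R$; and the monotonicity of $R$ from Step~1 propagates smallness of $R$ at one point of a level set to the whole level set. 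Together these should force $\liminf_{x\to\infty}R(x)=0$, contradicting Step~1.

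\emph{The main obstacle} is exactly Step~2: passing from the heavily weighted integral $\int_M|\mathrm{Ric}|^2 e^{-f}$, whose weight $e^{-f}\approx e^{-r^2/4}$ suppresses precisely the region near infinity, to a pointwise statement. A priori the curvature may grow along a ray (only $R\le f\lesssim r^2$ is obvious), so a key and delicate sub-step is to show, using $\mathrm{sec}\ge 0$ and the soliton identities, that the curvature is bounded, or at least controlled, at infinity; only with such control do the level-set geometry through $\mathrm{II}$, the radial decay of $\mathrm{Ric}$ along a ray, and the monotonicity of $R$ assemble into a contradiction. Making these interdependent estimates mutually consistent, and carrying out the iteration argument on the drift-Laplacian equation for $R$, is where the real work lies.
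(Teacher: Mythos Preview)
The paper does not itself prove this statement; it is quoted from \cite{Munteanu-Wang}. However, the paper's proof of Theorem~\ref{compactnessresult} is presented as a direct adaptation of the Munteanu--Wang argument, so one can read off from Section~2 what that argument is. Its structure is essentially the \emph{opposite} of yours. Rather than establishing a uniform lower bound $R\ge c_0>0$ and then trying to force $\liminf R=0$, the Munteanu--Wang method shows that the \emph{smallest eigenvalue} $\lambda$ of $\mathrm{Ric}$ obeys $\Delta_f\lambda\le\lambda$ in the barrier sense (here $\mathrm{sec}\ge 0$ enters, exactly as nonnegative bisectional curvature does in (\ref{nonnegativity})), then runs a maximum principle with the explicit barrier $U=\lambda-\tfrac{a}{f}-\tfrac{2na}{f^2}$ to conclude $\mathrm{Ric}\ge \tfrac{a}{f}$ outside a compact set. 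From this one gets $R\ge b\log f\to\infty$, which contradicts the elementary fact (from integrating $\Delta f+R=n$ over $\{f\le c\}$ and using Bishop--Gromov) that the average of $R$ over large balls stays bounded. So the contradiction is ``$R$ too large'', not ``$R$ too small''.

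Your Step~1 is fine, but Step~2 is where the proposal breaks down, and not just in the sense of unfinished details. The weighted identity $\int_M|\mathrm{Ric}|^2e^{-f}=\tfrac12\int_M R\,e^{-f}$ carries essentially no information at infinity because of the Gaussian weight, as you note; the second-variation bound $\int_1^\infty\mathrm{Ric}(\gamma',\gamma')\,ds\le C(n)$ only controls $\mathrm{Ric}$ in the radial direction at \emph{some} points along one ray, and $\mathrm{sec}\ge 0$ does not let you pass from small $\mathrm{Ric}(\gamma',\gamma')$ to small $R$ (the transverse Ricci eigenvalues can stay large); and the monotonicity of $R$ along $\nabla f$ goes the wrong way for your purpose --- it propagates largeness outward, not smallness across a level set. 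None of these pieces, separately or together, produces $\liminf_{x\to\infty}R(x)=0$. The missing idea is precisely the eigenvalue inequality $\Delta_f\lambda\le\lambda$ and the barrier argument above: once you have $\mathrm{Ric}\ge a/f$, the contradiction is immediate and no delicate integral-to-pointwise passage is needed.
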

In this paper, we consider the gradient K\"ahler-Ricci soliton, namely a triple $(M^n, g, f)$ associated with a K\"ahler manifold $(M,g)$ such that
\begin{equation}\label{solitonequation}
R_{i\overline{j}}+\nabla_{i}\nabla_{\overline{j}}f=\beta g_{i\overline{j}},\quad\quad {\rm and}\quad\quad \nabla_{i}\nabla_{j}f=0.
\end{equation}
for some constant $\beta\in {\mathbb R}$. It is called shrinking, steady or expanding, if $\beta>0$, $\beta=0$ or $\beta<0$ respectively.
Using similar method as in \cite{Munteanu-Wang}, we prove the following theorem.
\begin{theorem}\label{Main Theorem}
Shrinking gradient K\"ahler-Ricci solitons with nonnegative bisectional curvature and positive Ricci curvature are compact.
\end{theorem}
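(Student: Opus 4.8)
The plan is to argue by contradiction, following the strategy of Munteanu--Wang. Suppose $(M^{n},g,f)$ is a complete \emph{noncompact} shrinking gradient K\"ahler--Ricci soliton with nonnegative bisectional curvature and positive Ricci curvature. After rescaling we may assume $\beta=\tfrac12$, so that $R_{i\bar j}+\nabla_{i}\nabla_{\bar j}f=\tfrac12 g_{i\bar j}$ and $\nabla_{i}\nabla_{j}f=0$, and, after adding a constant to $f$, that $\mathcal R+|\nabla f|^{2}=f$, where $\mathcal R$ denotes the scalar curvature. We shall use the following standard facts about shrinkers. By Cao--Zhou, $f$ is proper, satisfies $\tfrac14(\rho(x)-c)^{2}\le f(x)\le\tfrac14(\rho(x)+c)^{2}$ for $\rho(x)=d(x_{0},x)$ large, and $M$ has at most Euclidean volume growth. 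Nonnegativity of the bisectional curvature gives $\mathrm{Ric}\ge0$ and $\mathcal R\ge0$, hence $\mathcal R>0$ everywhere because $\mathrm{Ric}>0$; the curvature of $M$ is bounded --- this is standard under a curvature nonnegativity hypothesis, using the identity $\Delta_{f}\mathcal R=\mathcal R-2|\mathrm{Ric}|^{2}$, $\mathrm{Ric}\ge0$, the maximum principle, together with the fact that on a K\"ahler manifold with $\mathrm{BK}\ge0$ the norm of $\mathrm{Rm}$ is controlled by the holomorphic sectional curvature, which is dominated by $\mathcal R$. Finally, $\nabla_{i}\nabla_{j}f=0$ says that $X:=\nabla f$ is a real holomorphic vector field with $JX$ Killing, and every critical point of $f$ satisfies $f=\mathcal R$, hence lies in the compact set $\{f\le\sup\mathcal R\}$.

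The crucial step, in the spirit of Munteanu--Wang, is to produce a degeneration of the bisectional curvature at spatial infinity. Let $\gamma:[0,\infty)\to M$ be a unit-speed minimizing geodesic from $x_{0}$, so $\rho(\gamma(s))=s$. Restricting the soliton equation to $\dot\gamma$ gives $(f\circ\gamma)''(s)=\mathrm{Hess}\,f(\dot\gamma,\dot\gamma)=\tfrac12-\mathrm{Ric}(\dot\gamma(s),\dot\gamma(s))$, so $(f\circ\gamma)'(s)=\tfrac{s}{2}+\mathrm{const}-\int_{0}^{s}\mathrm{Ric}(\dot\gamma(u),\dot\gamma(u))\,du$; comparing this with the Cao--Zhou bounds on $f\circ\gamma$, and using $\mathrm{Ric}\ge0$, forces $\int_{0}^{\infty}\mathrm{Ric}(\dot\gamma(s),\dot\gamma(s))\,ds<\infty$. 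Hence $\liminf_{s\to\infty}\mathrm{Ric}(\dot\gamma(s),\dot\gamma(s))=0$, and (using the curvature bound and Shi's estimates) in fact $\mathrm{Ric}(\dot\gamma(s),\dot\gamma(s))\to0$: there are points $q_{j}=\gamma(s_{j})\to\infty$ where the Ricci curvature in the direction $\dot\gamma(s_{j})$ tends to $0$. (One can reach the same conclusion along the integral curves of $X$, which escape to infinity; along them $\tfrac{d}{ds}(\mathcal R\circ\gamma)=2\,\mathrm{Ric}(X,X)\ge0$ by the Bianchi identity $\nabla_{k}\mathcal R=2R_{k\ell}\nabla^{\ell}f$, and $\mathcal R$ is bounded, so $\mathrm{Ric}(X,X)\to0$ and, since $|X|^{2}=f-\mathcal R\to\infty$, the radial Ricci curvature tends to $0$.) Now nonnegativity of the bisectional curvature is decisive: writing $\mathrm{Ric}(v,v)$ at $q_{j}$, for $v$ the relevant unit vector, as the sum over a unitary frame of the \emph{nonnegative} bisectional curvatures $R(Z,\bar Z,e_{k},\bar e_{k})$ with $Z$ the $(1,0)$-part of $v$, each term tends to $0$; in particular the holomorphic sectional curvature $R(Z,\bar Z,Z,\bar Z)$, and every bisectional curvature paired with $Z$, decay to $0$ along $q_{j}$.

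It remains to turn this degeneration into a contradiction, which is where the Munteanu--Wang argument is genuinely needed and where the main obstacle lies. Because the curvature is uniformly bounded and the volume growth is at most Euclidean, a subsequence of $(M,g,q_{j})$ converges in the pointed Cheeger--Gromov sense to a limit $(M_{\infty},g_{\infty},q_{\infty})$; since every sub-arc of $\gamma$ is minimizing, $\gamma$ passes to a line in $M_{\infty}$, and the degeneration above says that the bisectional curvature of $g_{\infty}$ vanishes in the direction of that line. By the Cheeger--Gromoll splitting theorem together with Mok's strong maximum principle for the bisectional curvature --- and using that $X$ is holomorphic and $J$ parallel, so the flat direction produced is $J$-invariant --- $M_{\infty}$ isometrically splits off a flat $\mathbb C$-factor. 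By the rigidity part of the Munteanu--Wang scheme this is incompatible with $\mathrm{Ric}>0$: one propagates the splitting back to $M$, either via Mok's strong maximum principle along the K\"ahler--Ricci flow generated by the soliton, or via Lei Ni's monotonicity formula for the averaged scalar curvature of complete K\"ahler manifolds with $\mathrm{BK}\ge0$; in either case one obtains a flat direction on $M$, contradicting $\mathrm{Ric}>0$. Hence no noncompact example exists, so $M$ is compact. The hard part throughout is that the curvature degeneration is controlled only along one family of curves, so controlling the transverse geometry well enough to extract and then propagate a genuine splitting is the technical heart of the argument; this is precisely where the K\"ahler identities --- the parallel complex structure, the holomorphicity of $\nabla f$, Bando's preservation of $\mathrm{BK}\ge0$ under the K\"ahler--Ricci flow, and Mok's strong maximum principle --- have to be exploited.
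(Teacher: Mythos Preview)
Your proposal is not the argument the paper gives, and it also has a genuine gap.

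\textbf{What the paper actually does.} The paper (following Munteanu--Wang) never passes to a blow-down limit or invokes Cheeger--Gromoll splitting. Instead it works directly on $M$: using $\Delta_{f}R_{i\bar j}=R_{i\bar j}-R_{i\bar j k\bar l}R_{l\bar k}$ and $\mathrm{BK}\ge 0$, the smallest eigenvalue $\lambda$ of $\mathrm{Ric}$ satisfies $\Delta_{f}\lambda\le\lambda$ in the barrier sense. A maximum-principle argument applied to $U=\lambda-\tfrac{a}{f}-\tfrac{2na}{f^{2}}$ on the exterior of a large ball yields the quantitative lower bound $\mathrm{Ric}\ge \tfrac{a}{f}$, from which one deduces $R\ge b\log f$. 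This contradicts the fact, read off from $\Delta f+R=n$ and Bishop--Gromov, that the average of $R$ over large balls stays bounded. No limit spaces, no curvature boundedness, no injectivity-radius control are needed.

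\textbf{Where your argument breaks.} Your integrability step (forcing $\mathrm{Ric}(\dot\gamma,\dot\gamma)\to 0$ along rays, hence bisectional curvatures paired with the radial direction decay) is fine and is a well-known heuristic. The problem is everything after that. You assert uniform curvature bounds by saying ``$|\mathrm{Rm}|$ is controlled by $\mathcal R$ and $\mathcal R$ is bounded,'' but on a noncompact shrinker one only has $\mathcal R\le f$ a priori, not a uniform bound; this needs an argument you do not supply. For the Cheeger--Gromov limit you also need noncollapsing at the $q_{j}$, which you do not address. Most importantly, the final step---``propagate the splitting back to $M$''---is not a proof but a hope: a flat $\mathbb C$-factor in a pointed limit does \emph{not} by itself produce a parallel null direction of $\mathrm{Ric}$ on $M$; Mok's strong maximum principle applies once you already have a point of $M$ where the bisectional curvature (or the lowest Ricci eigenvalue) vanishes, and your degeneration only gives that in the limit, not on $M$. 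Invoking ``Ni's monotonicity'' here is circular, since that is precisely the alternative proof you were meant to avoid. So as written the argument does not close; the missing idea is exactly the quantitative lower bound $\mathrm{Ric}\ge a/f$ via the barrier maximum principle, which replaces your limiting step entirely.
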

Actually, L. Ni \cite{Ni} proved the above theorem using different  method. He firstly proved that the scalar curvature has a uniform positive lower bound, and used the result that the average of scalar curvature on a noncompact K\"ahler manifold with positive bisectional curvature has a decay estimate of the form $\frac{C}{1+r}$; in this way he \cite{Ni}  proved  the following theorem.
\begin{theorem}[Ni \cite{Ni}]\label{Lei Ni}
Let $(M^n, g)$ be a shrinking gradient K\"ahler-Ricci soliton with nonnegative bisectional curvature, then we have\\
{\rm(i)} If the bisectional curvature of $M$ is positive then $M$ must be compact and isometric-biholomorphic to ${\mathbb C}P^{n}$;\\
{\rm(ii)} If $M$ has nonnegative bisectional curvature then the universal cover $\tilde{M}$ splits as $\tilde{M}=N_{1}\times N_{2}\times\cdots\times N_{l}\times {\mathbb C}^{k}$ isometric-biholomorphically, where $N_{i}$ are compact irreducible Hermitian Symmetric Spaces.
\end{theorem}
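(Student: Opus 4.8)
Here is the plan. We argue by contradiction, adapting the level-set analysis of Munteanu--Wang \cite{Munteanu-Wang} to the K\"ahler setting, so assume $(M^{n},g,f)$ is a \emph{noncompact} shrinking gradient K\"ahler--Ricci soliton with nonnegative bisectional curvature and $\mathrm{Ric}>0$. Rescaling the metric we may take $\beta=\tfrac12$ in \eqref{solitonequation}; since $\nabla_{i}\nabla_{j}f=0$, the soliton equation then reads $\mathrm{Ric}+\nabla^{2}f=\tfrac12 g$ as real symmetric $2$-tensors, so the normal form of Cao--Zhou for Riemannian shrinkers applies: after adding a constant, $f\ge 0$ is a proper exhaustion with a single critical point $p_{0}$ (the minimum), $\tfrac14(r-c_{1})^{2}\le f\le \tfrac14(r+c_{2})^{2}$ for $r=d(p_{0},\cdot)$, and
\begin{equation*}
R+|\nabla f|^{2}=f,\qquad R+\Delta f=n .
\end{equation*}
Since $\mathrm{Ric}>0$, $R>0$ everywhere, and because the Ricci tensor of a K\"ahler manifold is of type $(1,1)$, nonnegative bisectional curvature gives the pointwise pinching $|\mathrm{Rm}|\le c(n)\,R$; in particular $|\mathrm{Rm}|$ and $R$ are comparable, so bounding $R$ is the same as bounding the full curvature.

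The next step, where $\mathrm{Ric}>0$ enters in an essential way, is a monotonicity of the scalar curvature. The contracted Bianchi identity together with the soliton equation give $\nabla R=2\,\mathrm{Ric}(\nabla f,\cdot)$, hence $\langle\nabla R,\nabla f\rangle=2\,\mathrm{Ric}(\nabla f,\nabla f)$, which is strictly positive away from $p_{0}$. Thus $R$ is \emph{strictly increasing} along each integral curve of $\nabla f$: parametrising such a curve $\gamma$ by $t=f$ and setting $\nu=\nabla f/|\nabla f|$, one has $\tfrac{d}{dt}R(\gamma(t))=2\,\mathrm{Ric}(\nu,\nu)>0$. Since the gradient flow of $f$ identifies the compact level sets $\Sigma_{t}=\{f=t\}$ (for $t>f(p_{0})$), the problem reduces to understanding the induced geometry of $\Sigma_{t}$, and the behaviour of $R$ on it, as $t\to\infty$; in particular, if $R$ is bounded then along every $\gamma$ it increases to a limit $R_{\infty}>R(p_{0})>0$.

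The heart of the argument then follows Munteanu--Wang. On $\Sigma_{t}$ the second fundamental form is $A=|\nabla f|^{-1}\big(\tfrac12 g-\mathrm{Ric}\big)\big|_{T\Sigma_{t}}$, $|\nabla f|^{2}=t-R$, and the induced metric evolves by $\partial_{t}g_{t}=|\nabla f|^{-2}\big(g_{t}-2\,\mathrm{Ric}|_{T\Sigma_{t}}\big)$; combining this with $0\le\mathrm{Ric}\le R\,g$, the drift equation $\Delta_{f}R=R-2|\mathrm{Ric}|^{2}$ (together with $2|\mathrm{Ric}|^{2}\ge R^{2}/n$), and the divergence theorem applied to $\Delta f=n-R$ and to $|\nabla f|^{2}=f-R$, one extracts a closed relation between $V(t)=\mathrm{Vol}\{f\le t\}$ and $\mathcal R(t)=\int_{\{f\le t\}}R$, namely $t\,V'(t)=n\,V(t)-\mathcal R(t)+\mathcal R'(t)$. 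Feeding in the pinching and the strict monotonicity of $R$, these relations force a definite growth behaviour of $V$ and a definite decay of $R$ which cannot coexist with $\mathrm{Ric}>0$: when $R$ is bounded one gets $V(t)\lesssim t^{\,n-R(p_{0})}$, so that (the shrinker being non-collapsed) the rescalings $t^{-1}g$ subconverge along $\gamma$ to a scale-invariant model, which is again K\"ahler, on which $R(\gamma(t))$ would have to decay like $t^{-1}$, contradicting $R(\gamma(t))\nearrow R_{\infty}>0$; when $R$ is unbounded, a point-picking blow-up at a curvature maximum (using $\sec\ge 0$ and the non-collapsing of shrinkers) produces a complete nonflat ancient solution whose splitting behaviour, transported back by Hamilton's strong maximum principle, again forces a kernel in $\mathrm{Ric}$. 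Either way this contradicts the hypothesis, so $M$ must be compact.

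I expect the delicate point to be exactly this last dichotomy: the control of the scalar curvature at infinity. In contrast with the compact case, the drift equation and the generalised Omori--Yau principle only yield $\sup_{M}R\ge\tfrac12$, not an upper bound, so one genuinely has to run the blow-up analysis in the unbounded case; and in the bounded case one must make precise the convergence of $t^{-1}g$ to its model and the resulting decay of $R$. The K\"ahler hypothesis is used at two places: through the pinching $|\mathrm{Rm}|\le c(n)R$, which turns any scalar-curvature bound into a full curvature bound, and through the persistence of the complex structure in the limit, which is what makes the scale-invariant model incompatible with a positive, increasing scalar curvature. The remaining tools --- Cao--Zhou's normal form, the non-collapsing of shrinkers, Hamilton's strong maximum principle --- are the same ones used by Munteanu--Wang in the Riemannian case.
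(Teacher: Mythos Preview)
The paper does not prove Theorem~\ref{Lei Ni}; it is attributed to Ni and merely cited. What the paper does prove is the weaker Theorem~\ref{Main Theorem} (restated and proved as Theorem~\ref{compactnessresult}): nonnegative bisectional curvature together with positive Ricci curvature forces compactness. Your proposal likewise addresses only this compactness claim --- you do not touch the $\mathbb{C}P^{n}$ classification in part~(i) (which, once compactness is known, requires Mori/Siu--Yau) nor the splitting in part~(ii) --- so at best you would be reproving Theorem~\ref{Main Theorem}, not the stated result.

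Even for compactness, your route is both different from and weaker than the paper's. The paper's argument is short and self-contained: from Lemma~\ref{several Identity}(c) and nonnegative bisectional curvature the smallest Ricci eigenvalue $\lambda$ satisfies $\Delta_{f}\lambda\le\lambda$ in the barrier sense; comparison with $a/f+2na/f^{2}$ via the maximum principle gives $\mathrm{Ric}\ge a/f$ outside a compact set, hence $R\ge b\log f$, and this contradicts $\int_{\{f\le c\}}R\le n\,\mathrm{Vol}\{f\le c\}$ (obtained by integrating $\Delta f+R=n$). No rescaling limits, no dichotomy, no compactness theorems. Your plan instead tries to run the full Munteanu--Wang level-set/blow-up analysis, which you explicitly flag as incomplete at the bounded/unbounded-$R$ dichotomy, and which carries a real gap: in the unbounded-$R$ branch you invoke ``$\sec\ge 0$'' for the point-picking limit, but that is the Riemannian hypothesis of \cite{Munteanu-Wang}, not yours, and you have not argued that nonnegative \emph{bisectional} curvature supplies the control that step needs. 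The paper exploits the K\"ahler structure precisely to sidestep this issue, using nonnegative bisectional curvature only once, to make $R_{i\bar j k\bar l}R_{l\bar k}v^{i}v^{\bar j}\ge 0$ at a minimal Ricci eigenvector, which is exactly what drives the barrier inequality.
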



Next section we will give the proof the Theorem \ref{Main Theorem}. The idea follows from Munteanu-Wang \cite{Munteanu-Wang}: we assume the manifold is noncompact, then we have the asymptotic behavior of potential $f$, the key  is to get some lower bound of Ricci curvature. For this, we introduce a quantity  involving Ricci curvature and potential $f$ as in \cite{Chow-Lu-Yang} and \cite{Munteanu-Wang}, then using maximum principle, we can get the Ricci curvature lower bound which
is related to $f$. Once we have this,  it is easy to obtain that the average of scalar curvature on geodesic ball could be sufficiently large, however, it is impossible, so $M$ must be compact.

\section{Proof of  Theorem \ref{Main Theorem}}
For simplicity, we assume $\beta=1$. Firstly, we obtain a few formulas for gradient K\"ahler-Ricci solitons.
\begin{lemma}\label{several Identity}
{\rm(a)} $R+\Delta f= n;$\\
{\rm(b)} $R+|\nabla f|^{2}- f={\rm constant};$\\
{\rm(c)} $\Delta_{f} R_{i\overline{j}}= R_{i\overline{j}}-R_{i\overline{j}k\overline{l}}R_{l\overline{k}};$\\
{\rm(d)} $\Delta_{f} R= R-|{\rm Ric}|^{2}.$\\
Here $|\nabla f|^{2}=g^{i\overline{j}}\nabla_{i}f\nabla_{\overline{j}}f$, $\Delta_{f}R_{i\overline{j}}=\Delta R_{i\overline{j}}-g^{k\overline{l}}\nabla_{k}f\nabla_{\overline{l}}R_{i\overline{j}}.$
\end{lemma}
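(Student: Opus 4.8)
All four identities follow from the two soliton equations (\ref{solitonequation}) (with $\beta=1$), using only standard Kähler facts: symmetry of the mixed Hessian $\nabla_i\nabla_{\bar j}f$, the symmetries of the curvature tensor $R_{i\bar jk\bar l}$, the second Bianchi identity, and the Ricci commutation identity for covariant derivatives on $(0,1)$-forms. Part (a) is immediate: contracting the first equation of (\ref{solitonequation}) with $g^{i\bar j}$ and using $g^{i\bar j}R_{i\bar j}=R$, $g^{i\bar j}\nabla_i\nabla_{\bar j}f=\Delta f$, $g^{i\bar j}g_{i\bar j}=n$ gives $R+\Delta f=n$. Part (d) will drop out of (c) by a similar trace, so the real content is in (b) and (c).

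For (b), it suffices to show $\nabla_{\bar m}(R+|\nabla f|^2-f)=0$, the conjugate statement following automatically. Differentiating $|\nabla f|^2=g^{k\bar l}\nabla_k f\,\nabla_{\bar l}f$, the term involving $\nabla_{\bar m}\nabla_{\bar l}f$ vanishes since $\nabla_{\bar m}\nabla_{\bar l}f=\overline{\nabla_m\nabla_l f}=0$, and in the remaining term $\nabla_{\bar m}\nabla_k f=\nabla_k\nabla_{\bar m}f=g_{k\bar m}-R_{k\bar m}$; this gives $\nabla_{\bar m}|\nabla f|^2=\nabla_{\bar m}f-g^{k\bar l}R_{k\bar m}\nabla_{\bar l}f$. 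One then checks $\nabla_{\bar m}R=g^{k\bar l}R_{k\bar m}\nabla_{\bar l}f$ from the contracted second Bianchi identity $\nabla_{\bar m}R=g^{i\bar j}\nabla_{\bar j}R_{i\bar m}$ together with $R_{i\bar m}=g_{i\bar m}-\nabla_i\nabla_{\bar m}f$, commuting $\nabla_{\bar j}$ past $\nabla_i$ (which produces precisely the needed curvature term) and using $\nabla_{\bar j}\nabla_{\bar m}f=0$. Adding the two computations, the $\nabla_{\bar m}f$-terms and the Ricci terms cancel, proving (b).

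For (c), the main step, I start again from $R_{i\bar j}=g_{i\bar j}-\nabla_i\nabla_{\bar j}f$; the same type of commutation just used in (b) yields the tensor identity $\nabla_k R_{i\bar j}=R_{i\bar p k\bar j}\nabla^{\bar p}f$. Computing $g^{k\bar l}\nabla_{\bar l}\nabla_k R_{i\bar j}$ from this by the product rule produces two terms. In the term where $\nabla_{\bar l}$ falls on $\nabla^{\bar p}f$, the soliton equation replaces $\nabla_{\bar l}\nabla^{\bar p}f$ by $\delta_{\bar l}^{\bar p}-g^{q\bar p}R_{q\bar l}$, and the Kähler symmetries of $R_{i\bar jk\bar l}$ collapse the resulting contractions to exactly $R_{i\bar j}-R_{i\bar jk\bar l}R_{l\bar k}$. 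In the term where $\nabla_{\bar l}$ falls on the curvature tensor, the second Bianchi identity rewrites $g^{k\bar l}(\nabla_{\bar l}R_{i\bar p k\bar j})\nabla^{\bar p}f$ as $\nabla^{\bar p}f\,\nabla_{\bar p}R_{i\bar j}=g^{k\bar l}\nabla_k f\,\nabla_{\bar l}R_{i\bar j}$, which is precisely the drift term subtracted in $\Delta_f$. Hence $\Delta_f R_{i\bar j}=R_{i\bar j}-R_{i\bar jk\bar l}R_{l\bar k}$, and contracting with $g^{i\bar j}$ gives (d), since $g^{i\bar j}R_{i\bar j}=R$ and $g^{i\bar j}R_{i\bar jk\bar l}R_{l\bar k}=R_{k\bar l}R_{l\bar k}=|{\rm Ric}|^2$. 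The main obstacle is the curvature bookkeeping in these commutations: one must check that precisely the term $R_{i\bar jk\bar l}R_{l\bar k}$ survives (with the correct sign) and that the various $\nabla f$-directional terms assemble into the single drift term of $\Delta_f$ --- the Kähler curvature symmetries are essential for both.
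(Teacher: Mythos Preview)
Your proof is correct. For (a), (b), and (d) it agrees with the paper, which merely cites the soliton equation and the Bochner formula for (a)--(b) and obtains (d) by tracing (c); your argument for (b) via $\nabla_{\bar m}R=g^{k\bar l}R_{k\bar m}\nabla_{\bar l}f$ is exactly the computation implicit in that citation.

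For (c) you take a slightly different route from the paper's appendix. The paper substitutes $R_{i\bar j}=g_{i\bar j}-\nabla_i\nabla_{\bar j}f$ and computes $\Delta(\nabla_i\nabla_{\bar j}f)$ directly via a Bochner--Weitzenb\"ock commutation, which produces, in addition to the $R_{i\bar jk\bar l}\nabla_l\nabla_{\bar k}f$ term, a pair of Ricci terms $\tfrac12 R_{i\bar k}\nabla_k\nabla_{\bar j}f+\tfrac12 R_{k\bar j}\nabla_i\nabla_{\bar k}f$ that must then be cancelled after invoking $\nabla_i R=R_{i\bar k}\nabla_k f$ and the Bianchi identity $\nabla_{\bar j}R_{i\bar k}=\nabla_{\bar k}R_{i\bar j}$. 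You instead isolate the first-order identity $\nabla_k R_{i\bar j}=R_{i\bar p k\bar j}\nabla^{\bar p}f$ (which follows cleanly from the commutator $[\nabla_k,\nabla_{\bar j}]$ on $\nabla_i f$ together with $\nabla_k\nabla_i f=0$) and then differentiate once more; the second Bianchi identity immediately turns the curvature-derivative term into the drift term, and no extraneous Ricci terms ever appear. Both arguments use the same ingredients, but your organization is more economical: the intermediate identity you prove is of independent use and makes the cancellation structure transparent.
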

\begin{proof} Both (a) and (b) follow from the soliton equation (\ref{solitonequation}) and the Bochner formula. For the convenience of the reader, the proof of (c) is given
in the appendix. Taking trace over (c), we get (d).
\end{proof}

\begin{theorem}\label{compactnessresult}Suppose $(M^n, g,  f)$ is a complete  shrinking gradient K\"ahler-Ricci soliton , if we assume the bisectional curvature is nonnegative and the Ricci curvature is positive, then it must be compact.
\end{theorem}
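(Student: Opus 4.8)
The plan is to argue by contradiction: assume $M$ is noncompact. Following Munteanu--Wang, I would first extract the asymptotic behavior of the potential function $f$. From Lemma~\ref{several Identity}(b) we have $R + |\nabla f|^2 - f = c$ for a constant, and since the Ricci curvature is positive, $R > 0$; combined with the fact (standard for shrinkers, via Cao--Zhou-type estimates) that on a noncompact shrinker $f$ grows quadratically, one gets $\tfrac14 d(x_0,x)^2 - C \le f(x) \le \tfrac14(d(x_0,x)+C)^2$ and $|\nabla f|^2 \sim f$. In particular level sets of $f$ are comparable to geodesic spheres, and $\mathrm{Vol}(\{f \le t\})$ grows like $t^n$.

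The heart of the argument is a lower bound for the Ricci curvature along the "radial" direction $\nabla f$. As suggested in the introduction, I would introduce an auxiliary quantity built from the Ricci tensor and $f$ — something like $h = \dfrac{\mathrm{Ric}(\nabla f, \overline{\nabla f})}{|\nabla f|^2}$ or a suitably weighted variant, or the quantity $\dfrac{R_{i\overline j}\nabla_{\overline i} f \nabla_j f}{f}$ as in \cite{Chow-Lu-Yang} — and compute its drift-Laplacian $\Delta_f$ using Lemma~\ref{several Identity}(c) together with the soliton identities $\nabla_{\overline j} f = R_{k\overline j}\nabla_{\overline k}f$ (contract \eqref{solitonequation}) and the Bochner-type formulas. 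The nonnegativity of the bisectional curvature is exactly what is needed to control the curvature term $R_{i\overline j k\overline l}R_{l\overline k}$ with the right sign (this is where the Kähler hypothesis does the work that nonnegative sectional curvature does in \cite{Munteanu-Wang}). Applying the maximum principle / a cutoff argument to this quantity on the compact sets $\{f \le t\}$, I would derive a differential inequality forcing $\mathrm{Ric}(\nabla f, \overline{\nabla f}) \ge \phi(f)|\nabla f|^2$ for an explicit positive function $\phi$ that does not decay too fast — ideally $\phi(f) \gtrsim \mathrm{const}$, or at worst $\phi(f) \gtrsim 1/f$.

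Once such a Ricci lower bound is in hand, I would integrate it to estimate the average of the scalar curvature on large geodesic balls (or on the sets $\{f \le t\}$). Concretely, integrating $\Delta_f R = R - |\mathrm{Ric}|^2 \le R - \mathrm{Ric}(\nabla f,\overline{\nabla f})^2/|\nabla f|^2$ against the weighted measure $e^{-f}$ over $\{f\le t\}$, or directly bounding $\fint_{B_r} R$ from below using the pointwise estimate just obtained, one finds that this average is bounded below by a quantity that grows (or at least stays bounded away from zero in a way incompatible with the known behavior). But by Ni's decay estimate — or by the elementary identity $\int_M \mathrm{Ric}(\nabla f, \cdot)\, = \int_M (\tfrac12\nabla R + \cdots)$ and the fact that $\fint_{B_r} R \to 0$ as $r\to\infty$ on a noncompact Kähler manifold with nonnegative bisectional curvature — the average of $R$ must decay like $C/(1+r)$. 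These two facts contradict each other, so $M$ is compact.

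The main obstacle I anticipate is the middle step: choosing the right auxiliary quantity and pushing the maximum-principle computation through. The difficulty is twofold — first, the candidate quantity is only defined (or only well-behaved) where $\nabla f \ne 0$, so one must handle the critical points of $f$ and the localization carefully; second, the $\Delta_f$-computation produces a curvature term $R_{i\overline j k \overline l} R_{l\overline k}$ contracted with $\nabla f$, and extracting a favorable sign from it uses nonnegative bisectional curvature in a slightly delicate way (one wants to bound it below by a multiple of $\mathrm{Ric}(\nabla f,\overline{\nabla f})$, not merely control its sign). Getting the sharp form of the resulting inequality — enough to beat Ni's $C/(1+r)$ decay — is where the real care is required; a crude lower bound on $\mathrm{Ric}$ may not suffice, and one may need the full strength of $|\mathrm{Ric}|^2 \ge R^2/n$ or a clever use of (c) coupled with the soliton structure.
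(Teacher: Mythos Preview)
Your overall architecture matches the paper's --- argue by contradiction, use Cao--Zhou growth of $f$, run a maximum principle on an auxiliary Ricci quantity to get a lower bound, then derive a contradiction from the average scalar curvature on large balls. But the two concrete choices you make both differ from the paper's, and in each case the paper's choice is what dissolves the obstacles you yourself flag.

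First, the auxiliary quantity. The paper does not use the radial Ricci $\mathrm{Ric}(\nabla f,\overline{\nabla f})/|\nabla f|^2$; it uses the \emph{minimal eigenvalue} $\lambda(x)$ of the Ricci tensor. Parallel-translating the minimizing eigenvector gives a smooth barrier $h$ with $h(x)=\lambda(x)$, $h\ge\lambda$ nearby, and since the extended vector is parallel one has $\Delta_f h(x) = (\Delta_f R_{i\overline{j}})v^i v^{\overline{j}} = R_{i\overline{j}}v^iv^{\overline{j}} - R_{i\overline{j} k\overline{l}}R_{l\overline{k}}v^iv^{\overline{j}}$. Nonnegative bisectional curvature makes the last term nonnegative (diagonalize $\mathrm{Ric}$), so $\Delta_f\lambda\le\lambda$ in the barrier sense --- no need to squeeze out a multiple of $\mathrm{Ric}(\nabla f,\overline{\nabla f})$, and no issue at critical points of $f$. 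Comparing with $U=\lambda - a/f - 2na/f^{2}$ (where $a=\min_{\partial B(p,r)}\lambda>0$) via the minimum principle on $M\setminus B(p,r)$ gives $\lambda\ge a/f$ there; then, as in \cite{Munteanu-Wang}, this integrates along $\nabla f$ to $R\ge b\log f$.

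Second, the contradiction. The paper does \emph{not} invoke Ni's $C/(1+r)$ decay --- avoiding that is precisely the point of giving an alternative proof. Instead one integrates the soliton identity $\Delta f + R = n$ over $\{f\le c\}$ to obtain the elementary upper bound $\int_{\{f\le c\}} R \le n\cdot\mathrm{Vol}(\{f\le c\})$, hence a uniform bound on the average of $R$ over $B(p,r)$. On the other hand, $R\ge b\log f$ together with Bishop--Gromov (shift the center to $q$ with $d(p,q)=3r/4$) forces that average to grow like $\log r$. This is the contradiction. Your proposed route through Ni's decay would work in principle, but it imports a deep external result and reverts to the very strategy the paper is offering an alternative to.
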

\proof Assume that $(M, g, f)$ is noncompact.  From Lemma \ref{several Identity} , we know that $R+|\nabla f|^2-f=C$. By adding a constant to $f$ if necessary, we can assume that $R+|\nabla f|^2=f$.

Concerning the potential $f$,
Cao  and Zhou \cite{Cao-Zhou} proved that
\begin{eqnarray}\label{quadratic growth}
\frac{1}{2}(d(x,p)-C_1)^2\leq f(x)\leq \frac{1}{2}(d(x,p)+C_2)^2,
\end{eqnarray}
where $p$ is a fixed point, $C_1$ and $C_2$ are positive constants depending only on the dimension of the manifold and the geometry of the unit ball $B_p(1)$.

Denote $\lambda(x)$ as the minimal eigenvalue of the Ricci curvature at $x$, suppose $v$ is the eigenvector corresponding to $\lambda(x)$, then
\begin{eqnarray*}
R_{i\overline{j}k\overline{l}}R_{\overline{k}l}v^i v^{\overline{j}}= R(v,\overline{v},\frac{\partial}{\partial z^{k}},\frac{\partial}{\partial z^{\overline{l}}})R_{\overline{k}l}
\end{eqnarray*}
Diagonalizing $Ric$ at $x$ so that $R_{\overline{k}l}=\lambda_k \delta_{kl}$. Since the bisectional curvature is nonnegative, we have
\begin{eqnarray}\label{nonnegativity}
R_{i\overline{j}k\overline{l}}R_{\overline{k}l}v^i v^{\overline{j}}=R(v,\overline{v},\frac{\partial}{\partial z^{k}},\frac{\partial}{\partial z^{\overline{k}}})\lambda_k \geq 0
\end{eqnarray}
Hence $\lambda$
satisfies the following differential inequality in the sense of barrier,
\begin{equation}
\Delta_f \lambda\leq \lambda.
\end{equation}
Actually, this means that at any point $x$, we can find a smooth function $h$ such that $h(x)=\lambda(x)$, $h\geq \lambda$ on
 $B(x, \delta)$ and $\Delta_f h(x)\leq h(x)$, where $\delta$ is a small positive constant, $h$ can be constructed as follows.

 Suppose $v\in T_x^{1,0}M$ is the unit eigenvector corresponding to $\lambda(x)$,
  taking parallel translation of $v$  along any unit speed geodesic starting from $x$, then in a small neighborhood $B(x, \delta)$, we get a
  smooth vector field $V$ with $V(x)=v$. Define $h(y)=Ric(y)(V(y), \overline{V(y)})$, then $h\geq \lambda$ for $y\in B(x,\delta)$ and $h(x)=\lambda(x)$, moreover, using $\nabla V(x)=0$ and (\ref{nonnegativity}), we get
\begin{eqnarray*}
\Delta_f h(x)=(\Delta_f Ric) (V(x),\overline{ V(x)})=\Delta_f R_{i\overline{j}}v^i v^{\overline{j}}=(-R_{i\overline{j}k\overline{l}}R_{\overline{k}l}+R_{i\overline{j}}(x))v^i v^{\overline{j}}\leq Ric(x)(v,\overline{v})=h(x).
\end{eqnarray*}

Choose a geodesic ball $B(p,r)$ of radius $r$ large enough, because the Ricci curvature is positive, then
\begin{eqnarray*}
a=\min_{\partial B(p, r)}\lambda>0.
\end{eqnarray*}
 We define
\begin{eqnarray*}
U=\lambda-\frac{a}{f}-\frac{2n a}{f^2}.
\end{eqnarray*}
From the growth rate of $f$, it follows that if $r$ is large enough , $U>0$ on $\partial B(p,r)$.

\begin{equation*}
\Delta_f U=\Delta_f \lambda-\Delta_f \left(\frac{a}{f}\right)-\Delta_f \left(\frac{2n a}{f^2}\right)\leq \lambda-a(\frac{1}{f}-\frac{n}{f^2})-2 n a \cdot \frac{3}{2 f^2}=U.
\end{equation*}
We have now proved $\Delta_f U\leq U$ on $M\backslash B(p,r)$ if $r$ is large enough.

Next we prove $U\geq 0$ on $M\backslash B(p,r)$. If there is a point $y_0\in M\backslash B(p,r)$ such that $U(y_0)<0$, then there must exist
a point $x_0\in M\backslash B(p,r)$ such that
$U(x_0)=\min_{y\in \{M\backslash B(p,r)\}}U(y)<0$, this is due to $\underline{\lim}_{d(x,p)\rightarrow \infty}U(x)\geq 0$ and $U>0$ on $\partial B(p,r)$.

At $x_0$, suppose $v$ is the unit eigenvector corresponding to $\lambda(x_0)$. Taking parallel translation along all the unit speed geodesic starting from $x_0$,
then in a small ball $B(x_0, \delta)$ we get a smooth vector field $V$ with $V(x_0)=v$. Define $\widetilde{U}=Ric(V(y), V(y))-\frac{a}{f}-\frac{2 n a}{f^2}$,
 then for any $y\in B(x_0, \delta)$, $\widetilde{U}(y)\geq U(y)$ and $\widetilde{U}(x_0)=U(x_0)$, so
\begin{eqnarray*}
0\leq \Delta_f \widetilde{U}(x_0)\leq \widetilde{U}(x_0)<0.
\end{eqnarray*}
Contradiction, so $U\geq 0$ on $M\backslash B(p, r)$, i.e. $Ric\geq \frac{a}{f}$.
As the argument in \cite{Munteanu-Wang}, we get $R\geq b \cdot \log f$ for some $b>0$.

 From the soliton equation (\ref{solitonequation}),
we get $\Delta f+R=n$. Consider the sublevel set $\{f\leq c\}$ of $f$, integrate, we get
\begin{eqnarray*}
\int_{\{f\leq c\}}n&=&\int_{\{f\leq c\}}(\Delta f+ R)=\int_{\{f=c\}}|\nabla f|+\int_{\{f\leq c\}}R\geq \int_{\{f\leq c\}}R.
\end{eqnarray*}
So the average of $R$ over $\{f\leq c\}$ is less than $n$, then it is easy to see that the average of $R$ over $B(p,r)$ is less than some $A>n$.

On the other hand, using the argument in \cite{Munteanu-Wang}, picking $q$ with $d(p,q)=\frac{3 r}{4}$, by (\ref{quadratic growth}),  we have
\begin{eqnarray*}
\int_{B(p,r)} R \geq \int_{B(q,\frac{r}{4})}R \geq b\cdot\log\left(\frac{1}{2}(\frac{r}{2}-C_1)^2\right){\rm Vol}\left(B(q,\frac{r}{4})\right).
\end{eqnarray*}
Applying Bishop-Gromov relative volume comparison, we get
\begin{eqnarray*}
{\rm Vol}\left(B(q,\frac{r}{4})\right)\geq c(n){\rm Vol}\left(B(q,2r)\right)\geq c(n) {\rm Vol}\left(B(p,r)\right).
\end{eqnarray*}
Hence
\begin{eqnarray*}
\int_{B(p,r)}R \geq  b\cdot c(n)\cdot\log\left(\frac{1}{2}(\frac{r}{2}-C_1)^2\right){\rm Vol}\left(B(p,r)\right),
\end{eqnarray*}
this means that the average of $R$ over $B(p,r)$ is greater that $A>n$ if $r$ is sufficiently large. Contradiction.
\qed

\section*{Appendix}
\noindent In this appendix we give a proof of (c) in Lemma (\ref{several Identity}). It is easy to obtain this identity using Ricci flow, for the evolution of Ricci curvature, refer (Cor. 2.83 in \cite{Chow et}).
Using the equation of K\"ahler-Ricci soliton (\ref{solitonequation}) , we have
\begin{eqnarray*}
\begin{aligned}
\quad\Delta_{f}R_{i\overline{j}}&=-\Delta \nabla_{i}\nabla_{\overline{j}}f-\nabla_{k}f\nabla_{\overline{k}}R_{i\overline{j}}\\
&=-\nabla_{\overline{j}}\nabla_{i}\Delta f+R_{i\overline{j}k\overline{l}}\nabla_{l}\nabla_{\overline{k}}f-\frac{1}{2}R_{i\overline{k}}\nabla_{k}\nabla_{\overline{j}}f-\frac{1}{2}R_{k\overline{j}}\nabla_{i}\nabla_{\overline{k}}f-\nabla_{k}f\nabla_{\overline{k}}R_{i\overline{j}}\\
&=\nabla_{\overline{j}}(R_{i\overline{k}}\nabla_{k}f)-\nabla_{k}f\nabla_{\overline{k}}R_{i\overline{j}}+R_{i\overline{j}k\overline{l}}\nabla_{l}\nabla_{\overline{k}}f-\frac{1}{2}R_{i\overline{k}}\nabla_{k}\nabla_{\overline{j}}f-\frac{1}{2}R_{k\overline{j}}\nabla_{i}\nabla_{\overline{k}}f\\
&=\nabla_{\overline{j}}R_{i\overline{k}}\nabla_{k}f+R_{i\overline{k}}\nabla_{k}\nabla_{\overline{j}}f-\nabla_{k}f\nabla_{\overline{k}}R_{i\overline{j}}+R_{i\overline{j}k\overline{l}}\nabla_{l}\nabla_{\overline{k}}f-\frac{1}{2}R_{i\overline{k}}\nabla_{k}\nabla_{\overline{j}}f-\frac{1}{2}R_{k\overline{j}}\nabla_{i}\nabla_{\overline{k}}f\\
&=R_{i\overline{j}k\overline{l}}\nabla_{l}\nabla_{\overline{k}}f+\frac{1}{2}R_{i\overline{k}}\nabla_{k}\nabla_{\overline{j}}f-\frac{1}{2}R_{k\overline{j}}\nabla_{i}\nabla_{\overline{k}}f\\
&=R_{i\overline{j}k\overline{l}}( g_{l\overline{k}}-R_{l\overline{k}})+\frac{1}{2}R_{i\overline{k}}( g_{k\overline{j}}-R_{k\overline{j}})-\frac{1}{2}R_{k\overline{j}}( g_{i\overline{k}}-R_{i\overline{k}})= R_{i\overline{j}}-R_{i\overline{j}k\overline{l}}R_{l\overline{k}}\\
\end{aligned}
\end{eqnarray*}
where we have used $\nabla_{\overline{j}}R_{i\overline{k}}=\nabla_{\overline{k}}R_{i\overline{j}}$ in the fifth equality. Therefore we obtain formula (c).

\section*{Acknowledgements}
The second author would like to thank Math Department of Rutgers University for its hospitality. He is partially supported by NSFC No. 11301017, Research Fund for the Doctoral Program of Higher Education of China, the Fundamental Research Funds for the Central Universities and the Scholarship from China Scholarship Council. We are grateful to the referee for some helpful suggestions.


\begin{thebibliography}{}
\bibitem{Cao-Zhou}H. D. Cao, D. T. Zhou, {\em  On complete gradient shrinking Ricci solitons}, J. Differential Geom. 85 (2010), no. 2, 175-185.
\bibitem{Chow-Lu-Yang}B. Chow, P.Lu, B.Yang ,{\em  Lower bounds for the scalar curvatures of noncompact gradient Ricci solitons},  C. R. Math. Acad. Sci. Paris 349 (2011), no. 23-24, 1265-1267.
\bibitem{Chow et} B. Chow, S. C. Chu, D. Glickenstein,  C. Guenther, J. Isenberg, T. Ivey,  D. Knopf, P. Lu, F. Luo and L. Ni, {\em The Ricci flow: techniques and applications. Part I. Geometric aspects}. Mathematical Surveys and Monographs, 135, American Mathematical Society, Province, RI, 2007.
\bibitem{Munteanu-Wang}O.Munteanu, J.P.Wang,  {\em Positively curved shrinking Ricci solitons are compact}, arxiv: 1504.07898.
\bibitem{Ni}L.Ni,{\em Ancient solutions to K\"ahler-Ricci flow},  Math. Res. Lett. 12 (2005), no. 5-6, 633-653.
\end{thebibliography}
\end{document}